\newtheorem{theorem}{Theorem}[section]
\newtheorem{corollary}[theorem]{Corollary}
\newtheorem{lemma}[theorem]{Lemma}
\newtheorem{proposition}[theorem]{Proposition}
\theoremstyle{definition}
\newtheorem{definition}[theorem]{Definition}
\theoremstyle{definition}
\newtheorem{remark}[theorem]{Remark}
\theoremstyle{definition}
\newtheorem{example}[theorem]{Example}
\numberwithin{equation}{section}
\title{Lyapunov functions for trichotomy with growth rates of evolution operators in Banach spaces}
\date{}
\begin{document}


\maketitle

\author{ V. Crai, M. Megan}

\begin{abstract}
The main objective of this paper is to give a characterization in terms of Lyapunov functions for trichotomy with growth rates of evolution operators in Banach spaces.
\end{abstract}
\section{Introduction}
The concept of trichotomy, firstly arose in the work \cite{saker} of R.J. Sacker and G.R. Sell in 1976. They described trichotomy for linear differential systems by linear skew-product flows. Later, S. Elaydi and O. Hajek in \cite{elaydi1} and \cite{elaydi2} gave the notion of exponential trichotomy for differential equations and for nonlinear differential systems, respectively.

As a natural generalization of exponential dichotomy (firstly introduced by O. Perron in \cite{peron}) exponential trichotomy (see \cite{ba2}, \cite{ba3}, \cite{megan-stroica}, \cite{sasu} and the references therein) is one of the most complex asymptotic properties arising from the central manifold theory (see \cite{carr} and \cite{sasu}). The general case of trichotomy with growth rates was studied by L. Barreira and C. Valls in \cite{ba2}.

In the study of trichotomy the main idea is to obtain a decomposition of the space at every moment into three closed subspaces: the stable subspace, the unstable subspace and the central manifold.

In the previous studies of uniform and nonuniform trichotomies the growth rates are always assumed to be the same type functions.

This paper considers a general concept of trichotomy (and as particular case the general concept of uniform trichotomy) with different growth rates.

The main result of the paper is a characterization for nonuniform $(h_1,h_2,h_3,h_4)$-trichotomy respectively uniform $(h_1,h_2,h_3,h_4)$-trichotomy of evolution operators in Banach spaces.
The characterizations are given in terms of Lyapunov functions. Our characterizations are inspired by the work of Ya.B. Pesin (\cite{pesin}, \cite{pesin1}) and L. Barreira, C. Valls and D. Draghicevici (\cite{ba2}, \cite{ba3}).
\section{Growth rates for evolution operators}
Let $X$ be a real or complex Banach space. The norms on $X$ and on $\mathcal{B}(X)$, the Banach algebra of all linear and bounded operators on $X$, will be denoted by $\|\cdot\|$. We also denote by $$\Delta=\{(t,s)\in\mathbb{R}_+^2,t\geq s\geq 0\}$$.

\begin{definition}
A map  $U:\Delta\to\mathcal{B}(X)$ is called \textit{an evolution operator} on $X$ if:
\begin{itemize}
	\item[$(e_1)$]$U(t,t)=I$ (the identity operator on $X$), for every $t\geq 0$
	\item[] and
	\item[$(e_2)$]$U(t,t_0)=U(t,s)U(s,t_0)$, for all $(t,s),(s,t_0)\in\Delta$.
\end{itemize}	
\end{definition}
\begin{definition}
A map $P:\mathbb{R}_+\to\mathcal{B}(X)$ is called
	\begin{itemize}
		\item[(i)] \textit{a family of projectors} on $X$ if 
		$$P^2(t)=P(t),\text{ for every } t\geq 0;$$
		\item [(ii)] \textit{invariant} for the evolution operator $U:\Delta\to\mathcal{B}(X)$ if:
		\begin{align*}
		U(t,s)P(s)x=P(t)U(t,s)x, 
		\end{align*}
		for all $(t,s,x)\in \Delta\times X$;
		\item[(iii)] \textit{strongly invariant} for the evolution operator $U:\Delta\to\mathcal{B}(X)$ if it is invariant for $U$ and for all $(t,s)\in\Delta$ the restriction of $U(t,s)$ on Range $P(s)$ is an isomorphism from Range $P(s)$ to Range $P(t)$.
	\end{itemize}
	\end{definition}
\begin{remark}
	It is obvious that if $P$ is strongly invariant for $U$ then it is also invariant for $U$. The converse is not valid (see \cite{lupa}).
\end{remark}
\begin{remark}\label{rem-proiectorstrong}
	If the family of projectors $P:\mathbb{R}_+\to\mathcal{B}(X)$ is strongly invariant for the evolution operator $U:\Delta\to\mathcal{B}(X)$ then (\cite{lupa}) there exists a map $V:\Delta\to\mathcal{B}(X)$ with the properties: 
	\begin{itemize}
		\item[$(v_1)$] $V(t,s)$ is an isomorphism from Range $ P(t)$ to Range $ P(s)$, 
		\item [$(v_2)$] $U(t,s)V(t,s)P(t)x=P(t)x$,
		\item[$(v_3)$] $V(t,s)U(t,s)P(s)x=P(s)x$,
		\item[$(v_4)$]$V(t,t_0)P(t)=V(s,t_0)V(t,s)P(t)$,
		\item[$(v_5)$]$V(t,s)P(t)=P(s)V(t,s)P(t)$,
		\item[$(v_6)$] $V(t,t)P(t)=P(t)V(t,t)P(t)=P(t)$,
		\end{itemize}
	for all $(t,s),(s,t_0)\in \Delta$ and $x\in X$.
\end{remark}
\begin{definition}
	Let $P_1,P_2,P_3:\mathbb{R}\to\mathcal{B}(X)$ be three families of projectors on $X$. We say that the family $\mathcal{P}=\{P_1,P_2,P_3\}$ is 
	\begin{itemize}
		\item [(i)] \textit{orthogonal} if
		\begin{itemize}
			\item [$(o_1)$]$P_1(t)+P_2(t)+P_3(t)=I$ for every $t\geq 0$\\
			and
			\item[$(o_2)$] $P_i(t)P_j(t)=0$ for all $t\geq 0$ and all $i,j\in\{1,2,3\}$ with $i\neq j$;
		\end{itemize}
	\item[(ii)] \textit{compatible} with the evolution operator $U:\Delta\to\mathcal{B}(X)$ if
	\begin{itemize}
		\item[$(c_1)$] $P_1$ is invariant for $U$\\
		and
		\item[$(c_2)$] $P_2,P_3$ are strongly invariant for $U$.
	\end{itemize}
	\end{itemize}
\end{definition}
If $\mathcal{P}=\{P_1,P_2,P_3\}$ is compatible with $U$ then in what follows we shall denote by
 $V_j(t,s)$ the isomorphism (given by Remark \ref{rem-proiectorstrong}) from Range $P_j(t)$ to Range $P_j(s)$ where $j\in\{2,3\}$.
\begin{definition}
We say that a nondecreasing map $h:\mathbb{R}_+\to[1,\infty)$ is a \textit{ growth rate} if 
\begin{align*}
\lim\limits_{t\to\infty}h(t)=\infty.
\end{align*}
\end{definition}
As particular cases of growth rates we remark:
\begin{itemize}
	\item [$(r_1)$] \textit{exponential rates}, i.e.
	$h(t)=e^{\alpha t}$ with $\alpha>0;$
	\item [$(r_2)$]\textit{polynomial rates}, i.e.
	$h(t)=(t+1)^\alpha$ with $\alpha>0.$
\end{itemize}
Let $\mathcal{P}=\{P_1, P_2, P_3\}$ be an orthogonal family of projectors which is compatible with the evolution operator $U:\Delta\to\mathcal{B}(X)$ and let $H=\{h_1,h_2,h_3,h_4\}$ be a set of growth rates.
\begin{definition}\label{def-crestere}
	We say that the pair $(U,\mathcal{P})$ has a  \textit{H-growth} (and we denote H-g) if there exists a nondecreasing function $M:\mathbb{R}_+\to [1,\infty)$ such that:
	\begin{itemize}
			\item [$(h_1g_1)$  ]$h_1(s)\|U(t,s)P_1(s)x\|\leq M(s)h_1(t)\|x\|$
			\item [$(h_2g_1)$  ]$h_2(s)\|V_2(t,s)P_2(t)x\|\leq M(t) h_2(t)  \|x\|$
			\item [$(h_3g_1)$  ]$h_3(s)\|U(t,s)P_3(s)x\|\leq M(s) h_3(t)\|x\|$
			\item[$(h_4g_1)$  ]$h_4(s)\|V_3(t,s)P_3(t)x\|\leq M(t) h_4(t)  \|x\|$
		\end{itemize}
	for all	$(t,s,x)\in \Delta\times X.$
\end{definition}
\begin{remark}
	As  particular cases of H-g we have:
	\begin{itemize}
		\item[(i)]\textit{ the uniform-H-growth}(and we denote by  u-H-g) when the function M is constant;
		\item[(ii)] \textit{the exponential growth} ($e.g.$) and respectively \textit{uniform exponential growth} ($u.e.g$) when $h_1,h_2,h_3,h_4$ are exponential rates;
		\item[(iii)]\textit{the polynomial growth} ($p.g.$) and respectively \textit{uniform polynomial growth} ($u.p.g.$) when $h_1,h_2,h_3,h_4$ are polynomial rates;
	\end{itemize}
\end{remark}
A characterization of the H-g is given by
\begin{proposition}\label{prop crestere}
	The pair $(U,\mathcal{P})$ has H-g if and only if there exists a nondecreasing function $ M:\mathbb{R}_+\to[1,\infty)$ such that:
	\begin{itemize}
		\item [$(h_1g_2)$ ]$h_1(s)\|U(t,s)P_1(s)x\|\leq M(s)h_1(t)\|P_1(s)x\|$
		\item [$(h_2g_2)$ ]$h_2(s)\|P_2(s)x\|\leq M(t) h_2(t)  \|U(t,s)P_2(s)x\|$
		\item [$(h_3g_2)$ ]$h_3(s)\|U(t,s)P_3(s)x\|\leq M(s) h_3(t)\|P_3(s)x\|$
		\item[$(h_4g_2)$ ]$h_4(s)\|P_3(s)x\|\leq M(t) h_4(t)  \|U(t,s)P_3(s)x\|$
	\end{itemize}
for all $ (t,s,x)\in \Delta\times X$.
\end{proposition}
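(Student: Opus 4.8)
The plan is to prove the two implications separately, using throughout the projection identities $P_i^2(s)=P_i(s)$, the invariance relation $U(t,s)P_i(s)x=P_i(t)U(t,s)x$, and the identities $(v_2),(v_3),(v_5),(v_6)$ for the maps $V_2,V_3$ recorded in Remark \ref{rem-proiectorstrong}. The organising observation is that passing from a bound carrying $\|x\|$ on the right to the same bound carrying $\|P_i(s)x\|$ costs nothing (one simply substitutes a projected vector), whereas the reverse passage costs a factor $\|P_i(\cdot)\|$.

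For ``H-g $\Rightarrow (g_2)$'' I would keep the function $M$ from Definition \ref{def-crestere}. Replacing $x$ by $P_1(s)x$ in $(h_1g_1)$ and by $P_3(s)x$ in $(h_3g_1)$ and using $P_1^2(s)=P_1(s)$, $P_3^2(s)=P_3(s)$ leaves the left-hand sides untouched and produces exactly $(h_1g_2)$ and $(h_3g_2)$. For $(h_2g_2)$ I replace $x$ by $U(t,s)P_2(s)x$ in $(h_2g_1)$: invariance gives $P_2(t)U(t,s)P_2(s)x=U(t,s)P_2(s)x$, and then $(v_3)$ gives $V_2(t,s)P_2(t)U(t,s)P_2(s)x=V_2(t,s)U(t,s)P_2(s)x=P_2(s)x$, which is precisely $(h_2g_2)$. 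Condition $(h_4g_2)$ follows identically with $P_3,V_3$ in place of $P_2,V_2$.

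For the converse the substitutions run backwards. In $(h_2g_2)$ I set $x:=V_2(t,s)P_2(t)w$; by $(v_5)$ the vector $V_2(t,s)P_2(t)w$ is fixed by $P_2(s)$, so $P_2(s)x=V_2(t,s)P_2(t)w$, while $(v_2)$ gives $U(t,s)P_2(s)x=U(t,s)V_2(t,s)P_2(t)w=P_2(t)w$; hence $(h_2g_2)$ becomes $h_2(s)\|V_2(t,s)P_2(t)w\|\le M(t)h_2(t)\|P_2(t)w\|$, and the same computation turns $(h_4g_2)$ into its analogue with $\|P_3(t)w\|$. For $(h_1g_2)$ and $(h_3g_2)$ no substitution is needed. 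In each case it then remains to dominate $\|P_i(\cdot)w\|$ by $\|w\|$ via $\|P_i(\tau)w\|\le\|P_i(\tau)\|\,\|w\|$ and to absorb the resulting projector norms into a new nondecreasing $M$.

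The main obstacle is precisely this last replacement: the equivalence is clean only because the projector norms are controlled. I would handle it by first noting that H-g itself forces such control --- taking $t=s$ in $(h_1g_1)$--$(h_4g_1)$ and using $U(s,s)=I$ together with $(v_6)$ yields $\|P_i(s)\|\le M(s)$ for $i=1,2,3$ --- so both formulations live among operators with locally bounded projectors. Under this (standing) boundedness I set $N(\tau):=\sup_{0\le\sigma\le\tau}\max_i\|P_i(\sigma)\|$, which is finite and nondecreasing, and replace $M$ by $\tau\mapsto M(\tau)N(\tau)$; checking that $N$ is genuinely finite, i.e. that the families $P_1,P_2,P_3$ are bounded on bounded intervals, is the one point that really deserves care.
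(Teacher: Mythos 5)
Your substitution arguments coincide step for step with the paper's own proof. For necessity, the paper likewise gets $(h_1g_2)$ and $(h_3g_2)$ by replacing $x$ with $P_1(s)x$, resp.\ $P_3(s)x$, and gets $(h_2g_2)$, $(h_4g_2)$ from the identity $P_j(s)x=V_j(t,s)U(t,s)P_j(s)x$ -- exactly your substitution $x\mapsto U(t,s)P_j(s)x$ read backwards. For sufficiency, the paper uses $(v_5)$ and $(v_2)$ precisely as you do and then absorbs the projector norms into the running supremum $M_1(t)=\sup_{s\in[0,t]}M(s)\left(\|P_1(s)\|+\|P_2(s)\|+\|P_3(s)\|\right)$, which is your $M\cdot N$.

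The one place where you go beyond the paper is also where your argument fails. You justify finiteness of $N(\tau)=\sup_{\sigma\le\tau}\max_i\|P_i(\sigma)\|$ by observing that H-g at $t=s$ forces $\|P_i(s)\|\le M(s)$, and you conclude that ``both formulations live among operators with locally bounded projectors.'' That inference is circular in the only direction where the bound is needed. In the necessity direction you do have H-g, but there the projector norms are never used; in the sufficiency direction you must \emph{derive} H-g from $(h_1g_2)$--$(h_4g_2)$, so you cannot invoke it, and the $(g_2)$ conditions themselves give no control whatsoever of $\|P_i(s)\|$: setting $t=s$ in them yields only tautologies such as $\|P_i(s)x\|\le M(s)\|P_i(s)x\|$. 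Indeed, writing $y=P_i(s)x$, each $(g_2)$ condition is a statement about the restriction of $U(t,s)$ to the range of $P_i(s)$, hence insensitive to the size of the projectors. One can make this concrete in $X=\mathbb{R}^2$ with $P_3=0$: let $P_1(s)$ project onto $\mathrm{span}\{e_1\}$ along $\mathrm{span}\{v(s)\}$, let $U(t,s)(ae_1+bv(s))=ae_1+bv(t)$, and let the angle between $e_1$ and $v(s)$ shrink to $0$ as $s\to 1$; then all four $(g_2)$ conditions hold with $M\equiv 1$, while $\|P_1(s)\|$ blows up on $[0,1]$, so no nondecreasing finite-valued $M$ can satisfy $(h_1g_1)$ at $t=s$. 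Thus the sufficiency half genuinely requires local boundedness of the projector families as an additional hypothesis. To be fair, the paper has exactly the same gap -- it writes down $M_1$ with no comment on its finiteness -- so your proposal is no weaker than the published proof; but your claimed repair does not work, and the honest fix is to assume local boundedness of $\|P_i(\cdot)\|$ explicitly (or to restate the proposition accordingly).
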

\begin{proof}
	\textit{Necessity:} The implications $(h_1g_1)\Rightarrow(h_1g_2)$ and $(h_3g_1)\Rightarrow(h_3g_2)$ result by replacing $x$ by $P_1(s)x$ in $(h_1g_1)$ respectively in $(h_3g_1)$.
	
	For $(h_2g_1)\Rightarrow(h_2g_2)$ and $(h_4g_1)\Rightarrow(h_4g_2)$ we observe that by Remark \ref{rem-proiectorstrong} and Definition \ref{def-crestere} it follows that
	\begin{align*}
	h_j(s)\|P_j(s)x\|&=h_j(s)\|V_j(t,s)U(t,s)P_j(s)x\|\leq M(t)h_j(t)\|U(t,s)P_j(s)x\|
	\end{align*}for all $(t,s,x)\in\Delta\times X$ and $j\in\{2,4\}$.

	\textit{Sufficiency:}	We denote by
	$$M_1(t)=\sup_{s\in[0,t]}M(s)(\|P_1(s)\|+\|P_2(s)\|+\|P_3(s)\|).$$
	
	For implications $(h_ig_2)\Rightarrow(h_ig_1)$, where $i\in\{1,3\}$ we observe that 	
	\begin{equation*}
		h_i(s)\|U(t,s)P_i(s)x\|\leq M(s)h_i(t)\|P_i(s)x\|\leq M_1(s)h_i(t)\|x\|
	\end{equation*} 
	for all $(t,s,x)\in\Delta\times X$ and $i\in\{1,3\}$.
	
	Similarly for $(h_jg_2)\Rightarrow(h_jg_1)$, where $j\in\{2,4\}$ we have
	\begin{align*}
	h_j(s)\|V_j(t,s)P_j(t)x\|&=h_j(s)\|P_j(s)V_j(t,s)P_j(t)x\|\leq M(t)h_j(t)\|U(t,s)P_j(s)V_j(t,s)P_j(t)x\|\\
	&\leq M_1(t)h_j(t)\|x\|
	\end{align*}for all $(t,s,x)\in\Delta\times X$. 
\end{proof}
\begin{corollary}\label{cor-crestere-unif}
	If the pair $(U,\mathcal{P})$ has an uniform-H-growth then there exists $M\geq 1$ such that
	\begin{itemize}
		\item [$(uHg_1)$ ]$h_i(s)\|U(t,s)P_i(s)x\|\leq Mh_i(t)\|P_i(s)x\|$
		\item [$(uHg_2)$ ]$h_j(s)\|P_j(t)x\|\leq M h_j(t)  \|U(t,s)P_j(s)x\|$
	\end{itemize}
	for all $ (t,s,x)\in \Delta\times X, i\in\{1,3\}$ where $j\in\{2,4\}$.
\end{corollary}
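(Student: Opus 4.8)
The plan is to obtain the Corollary as a direct specialization of Proposition \ref{prop crestere}, since uniform-H-growth is, by definition, exactly H-growth in which the nondecreasing function $M$ of Definition \ref{def-crestere} is taken to be constant. The whole content should therefore already be available once the constant is fixed and the indices are matched, and I expect no genuine analytic difficulty.

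First I would fix the constant. Assuming $(U,\mathcal{P})$ has u-H-g, there is a value $M\geq 1$, namely the constant value of the function supplied by Definition \ref{def-crestere} (this is legitimate since that function takes values in $[1,\infty)$), such that $M(s)=M(t)=M$ for all $(t,s)\in\Delta$. In particular $(U,\mathcal{P})$ has H-g with this constant $M$.

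Next I would invoke the necessity part of Proposition \ref{prop crestere}. Its proof derives $(h_1g_2)$ and $(h_3g_2)$ from $(h_1g_1)$ and $(h_3g_1)$ simply by replacing $x$ with $P_i(s)x$, and it derives $(h_2g_2)$ and $(h_4g_2)$ from the identity $P_j(s)x=V_j(t,s)U(t,s)P_j(s)x$ of Remark \ref{rem-proiectorstrong}; in every case the constant carried along is precisely the $M$ of the hypothesis. Substituting $M(s)=M$ in $(h_1g_2)$ and $(h_3g_2)$ yields $(uHg_1)$ for $i\in\{1,3\}$, and substituting $M(t)=M$ in $(h_2g_2)$ and $(h_4g_2)$ yields $(uHg_2)$ for $j\in\{2,4\}$, under the standing convention that the growth rate $h_4$ is attached to the strongly invariant projector $P_3$ (so that $P_j$ reads as $P_2$ when $j=2$ and as $P_3$ when $j=4$, in agreement with the Proposition).

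The only point requiring care is bookkeeping rather than analysis: the two distinct occurrences $M(s)$ and $M(t)$ in the nonuniform inequalities must be recognized as collapsing to the single constant $M$, and the index $j=4$ must be correctly paired with $P_3$. For consistency with Proposition \ref{prop crestere} I would read the projector on the left of $(uHg_2)$ as evaluated at $s$; this is in any case forced, since with the projector evaluated at $t$ the inequality would fail whenever $P_j(s)x=0$ while $P_j(t)x\neq 0$. With these conventions the four inequalities are immediate, so the difficulty is purely notational.
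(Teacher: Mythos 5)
Your proposal is correct and follows essentially the same route as the paper, whose entire proof is ``It results from the proof of the previous proposition'': one runs the necessity half of the proof of Proposition \ref{prop crestere} with the constant $M$, noting that that half never introduces the projector-norm factor $M_1$ and so the constant survives intact. Your remarks on the bookkeeping (reading $P_j(t)$ as $P_j(s)$ in $(uHg_2)$ and pairing $j=4$ with $P_3$) correctly identify and repair what are evidently typographical slips in the statement.
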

\begin{proof}
	It results from the proof of the previous proposition.
\end{proof}
\section{Trichotomy with growth rates}
With the notations from the previous section we introduce the main concept studied in this paper by
\begin{definition}\label{def-tricho}
	We say that the pair $(U,\mathcal{P})$ is  \textit{H-trichotomic} (and we denote H-t) if there exists a nondecreasing function $N:\mathbb{R}_+\to [1,\infty)$ such that:
\begin{itemize}
	\item [$(h_1t_1)$ ]$h_1(t)\|U(t,s)P_1(s)x\|\leq N(s)h_1(s)\|x\|$
	\item [$(h_2t_1)$ ]$h_2(t)\|V_2(t,s)P_2(t)x\|\leq N(t) h_2(s)  \|x\|$
	\item [$(h_3t_1)$ ]$h_3(s)\|U(t,s)P_3(s)x\|\leq N(s) h_3(t)\|x\|$
	\item[$(h_4t_1)$ ]$h_4(s)\|V_3(t,s)P_3(t)x\|\leq N(t) h_4(t)  \|x\|$
\end{itemize}
for all	$(t,s,x)\in \Delta\times X.$
\end{definition}
In particular if the function $N$ is constant then we obtain the \textit{uniform-H-trichotomy} concept, denoted by u-H-t.	
\begin{remark}
	As important particular cases of H-trichotomy we have:
	\begin{itemize}
		\item[(i)] \textit{(nonuniform) exponential trichotomy} ($e.t.$) and respectively \textit{uniform exponential trichotomy} ($u.e.t.$) when $h_1,h_2,h_3,h_4$ are exponential rates;
		\item[(ii)]\textit{(nonuniform) polynomial trichotomy} ($p.t$) and respectively \textit{uniform polynomial trichotomy} ($u.p.t.$) when $h_1,h_2,h_3,h_4$ are polynomial rates;
		\item[(iii)]\textit{(nonuniform) $(h_1,h_2)$-dichotomy} ($(h_1,h_2)-d$) and respectively \textit{uniform $(h_1,h_2)$-dichotomy} ($u-(h_1,h_2)-d$) for $P_3=0$;
		\item[(iv)] \textit{(nonuniform) exponential dichotomy} ($e.d.$) and respectively \textit{uniform exponential dichotomy} ($u.e.d.$) when $P_3=0$ and $h_1,h_2$ are exponential rates;
		\item[(v)]\textit{(nonuniform) polynomial dichotomy} ($p.d.$) and respectively \textit{uniform polynomial dichotomy} ($u.p.d$) for $P_3=0$ and $h_1,h_2$ are polynomial rates;
		\end{itemize}
\end{remark}
A characterization of H-trichotomy is given by
\begin{proposition}\label{prop tricho}
	The pair $(U,\mathcal{P})$ is H-trichotomic if and only if there exists a nondecreasing function $ N:\mathbb{R}_+\to[1,\infty)$ such that:
	\begin{itemize}
		\item [$(h_1t_2)$ ]$h_1(t)\|U(t,s)P_1(s)x\|\leq N(s)h_1(s)\|P_1(s)x\|$
		\item [$(h_2t_2)$ ]$h_2(t)\|P_2(s)x\|\leq N(t) h_2(s)  \|U(t,s)P_2(s)x\|$
		\item [$(h_3t_2)$ ]$h_3(s)\|U(t,s)P_3(s)x\|\leq N(s) h_3(t)\|P_3(s)x\|$
		\item[$(h_4t_2)$ ]$h_4(s)\|P_3(s)x\|\leq N(t) h_4(t)  \|U(t,s)P_3(s)x\|$
	\end{itemize}
	for all $ (t,s,x)\in \Delta\times X$.
\end{proposition}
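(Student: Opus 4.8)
The plan is to follow the same two-part scheme used for Proposition~\ref{prop crestere}, exploiting the fact that the four trichotomy estimates split into a ``direct'' group indexed by $\{1,3\}$ (involving $U(t,s)$ and $P_i(s)$) and a ``reverse'' group governed by the backward operators, where $(h_2)$ is controlled by $V_2$ on Range $P_2$ and $(h_4)$ by $V_3$ on Range $P_3$. I note first that the central estimates $(h_3t_1),(h_4t_1)$ and $(h_3t_2),(h_4t_2)$ coincide verbatim with $(h_3g_1),(h_4g_1)$ and $(h_3g_2),(h_4g_2)$, so those equivalences are already contained in the proof of Proposition~\ref{prop crestere}; only the stable/unstable estimates $(h_1),(h_2)$ require attention, and these differ from their growth counterparts solely by interchanging the time arguments of the rates. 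Since $h_1,h_2$ enter merely as scalar factors that are carried through unchanged, the algebraic manipulations are identical and the swap plays no role.

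For necessity I would argue as follows. To obtain $(h_1t_2)$ from $(h_1t_1)$ I replace $x$ by $P_1(s)x$ and use idempotency $P_1(s)^2=P_1(s)$, so that the left-hand side is unchanged while $\|x\|$ on the right becomes $\|P_1(s)x\|$; the same substitution with $P_3$ yields $(h_3t_2)$ from $(h_3t_1)$. For the reverse estimates I use property $(v_3)$ to write $P_2(s)x=V_2(t,s)U(t,s)P_2(s)x$ and $P_3(s)x=V_3(t,s)U(t,s)P_3(s)x$, and then invoke invariance $(c_1),(c_2)$ in the form $P_2(t)U(t,s)P_2(s)x=U(t,s)P_2(s)x$ (and likewise with $P_3$), so that applying $(h_2t_1)$, respectively $(h_4t_1)$, to the vector $U(t,s)P_2(s)x$ (resp.\ $U(t,s)P_3(s)x$) collapses the nested expression back to $P_2(s)x$ (resp.\ $P_3(s)x$); this gives $(h_2t_2)$ and $(h_4t_2)$ with the same $N$.

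For sufficiency I would set
\[
N_1(t)=\sup_{s\in[0,t]}N(s)\bigl(\|P_1(s)\|+\|P_2(s)\|+\|P_3(s)\|\bigr),
\]
which is nondecreasing and takes values in $[1,\infty)$. For $i\in\{1,3\}$ the passage from $(h_it_2)$ to $(h_it_1)$ is immediate from $\|P_i(s)x\|\le\|P_i(s)\|\,\|x\|$ and the definition of $N_1$. For the reverse estimates I would use $(v_5)$ to insert the projector, writing $V_2(t,s)P_2(t)x=P_2(s)V_2(t,s)P_2(t)x$ (and similarly with $P_3,V_3$), apply $(h_2t_2)$ respectively $(h_4t_2)$ to the vector $V_2(t,s)P_2(t)x$ (resp.\ $V_3(t,s)P_3(t)x$), and then collapse the resulting inner expression by means of $(v_5)$ and $(v_2)$, which give $U(t,s)P_2(s)V_2(t,s)P_2(t)x=P_2(t)x$ (resp.\ with $P_3,V_3$); bounding $\|P_j(t)x\|\le\|P_j(t)\|\,\|x\|$ and absorbing the norm into $N_1$ then yields $(h_2t_1)$ and $(h_4t_1)$.

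The main obstacle is not analytic but bookkeeping: one must keep the pairing $V_2\leftrightarrow P_2$, $V_3\leftrightarrow P_3$ straight throughout and verify that in each reverse implication the nested operator string telescopes via exactly the right combination of invariance and the identities $(v_2),(v_3),(v_5)$. Once these collapses are checked, the estimates follow line by line as in Proposition~\ref{prop crestere}, the only difference being the harmless interchange of the rate arguments in the $h_1$- and $h_2$-inequalities.
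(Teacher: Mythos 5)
Your proposal is correct and is essentially the paper's own argument: the paper proves this proposition simply by declaring it ``similar with the proof of Proposition~\ref{prop crestere}'', and what you wrote is exactly that adaptation, carried out accurately (substituting $P_i(s)x$ for the direct estimates, substituting $U(t,s)P_j(s)x$ and using $(v_3)$ for necessity of the reverse ones, and substituting $V_j(t,s)P_j(t)x$ with $(v_5)$, $(v_2)$ and the enlarged nondecreasing bound $N_1$ for sufficiency). Your observation that the $(h_3),(h_4)$ estimates coincide verbatim with their growth counterparts and that the swapped rate arguments in $(h_1),(h_2)$ are inert scalar factors is also correct.
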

\begin{proof}
It is similar with the proof of the Proposition \ref{prop crestere}.
\end{proof}
\begin{corollary}
	If the pair $(U,\mathcal{P})$ has an uniform-H-trichotomy then there exists $N\geq 1$ such that
	\begin{itemize}
		\item [$(uh_1t_2)$ ]$h_1(t)\|U(t,s)P_1(s)x\|\leq Nh_1(s)\|P_1(s)x\|$
		\item [$(uh_2t_2)$ ]$h_2(t)\|P_2(s)x\|\leq N h_2(s)  \|U(t,s)P_2(s)x\|$
		\item [$(uh_1t_2)$ ]$h_3(s)\|U(t,s)P_3(s)x\|\leq Nh_3(t)\|P_3(s)x\|$
		\item [$(uh_2t_2)$ ]$h_4(s)\|P_3(s)x\|\leq N h_4(t)  \|U(t,s)P_3(s)x\|$
	\end{itemize}
	for all $ (t,s,x)\in \Delta\times X.$
\end{corollary}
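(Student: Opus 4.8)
The plan is to observe that the uniform-$H$-trichotomy hypothesis is exactly the statement that the function $N$ appearing in Definition \ref{def-tricho} may be taken constant, so that the four inequalities $(h_1t_1)$--$(h_4t_1)$ hold with a single constant $N\geq 1$. I would then run the necessity direction of the argument behind Proposition \ref{prop tricho} (which parallels the necessity part of Proposition \ref{prop crestere}) and check that each manipulation there transports the \emph{constant} $N$ unchanged into the conclusions $(uh_1t_2)$--$(uh_2t_2)$.

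For the indices $i\in\{1,3\}$ the passage is purely algebraic: replacing $x$ by $P_i(s)x$ in $(h_it_1)$ and using the idempotence $P_i(s)^2=P_i(s)$ turns the right-hand side $N h_i(\cdot)\|x\|$ into $N h_i(\cdot)\|P_i(s)x\|$, yielding the first and third listed inequalities with the same $N$. For $j\in\{2,4\}$ I would invoke Remark \ref{rem-proiectorstrong}, in particular property $(v_3)$. Setting $z=U(t,s)P_j(s)x$ and using invariance one has $P_j(t)z=z$, hence $V_j(t,s)P_j(t)z=V_j(t,s)z=V_j(t,s)U(t,s)P_j(s)x=P_j(s)x$; substituting $z$ in place of $x$ in $(h_2t_1)$ respectively $(h_4t_1)$ then converts these into the second and fourth inequalities, again with the constant $N$ untouched.

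The only point that requires care — and the step I would treat as the genuine obstacle — is guaranteeing that the constant really is preserved, rather than being replaced by a $t$-dependent quantity. This is why I would deliberately use the \emph{necessity} half of Proposition \ref{prop tricho} and avoid its sufficiency half: the sufficiency argument passes through the auxiliary function $M_1(t)=\sup_{s\in[0,t]}M(s)\bigl(\|P_1(s)\|+\|P_2(s)\|+\|P_3(s)\|\bigr)$, whose supremum over $[0,t]$ need not be constant even when $M$ is, and the operator norms $\|P_i(s)\|$ could destroy uniformity. Since the substitutions $x\mapsto P_i(s)x$ and $x\mapsto U(t,s)P_j(s)x$ used above introduce no such norms and carry $N$ verbatim through every inequality, the same $N\geq 1$ serves in all four conclusions, which completes the proof exactly in the spirit of the remark that Corollary \ref{cor-crestere-unif} ``results from the proof of the previous proposition.''
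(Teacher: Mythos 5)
Your proof is correct and follows essentially the same route as the paper: the paper's proof is just a pointer to the necessity direction of Proposition \ref{prop tricho} (itself modeled on Proposition \ref{prop crestere}), which is exactly what you carry out, with the substitutions $x\mapsto P_i(s)x$ for $i\in\{1,3\}$ and $x\mapsto U(t,s)P_j(s)x$ together with property $(v_3)$ of Remark \ref{rem-proiectorstrong} for the indices $j\in\{2,4\}$. Your additional observation that only the necessity half preserves the constant (since the sufficiency half introduces the non-constant factor $M_1(t)$ involving projector norms) correctly identifies why the paper's cross-reference to the earlier proof is legitimate.
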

\begin{proof}
It is similar with the proof of Corollary \ref{cor-crestere-unif}.	
\end{proof}

\section{Examples and counterexamples}

In this section we consider the set $H=\{h_1,h_2,h_3,h_4\}$ and three orthogonal projectors $P_1,P_2,P_3\in\mathcal{B}(X)$ (i.e. $P_1+P_2+P_3=I, P_i^2=P_i$ for every $i\in\{1,2,3\}$ and $P_iP_j=0$ for $i\neq j$). The connections between the concepts of trichotomy and growths defined in the previous sections are given in the following diagram
\[
\begin{matrix}
	u.-H-t & \Rightarrow &H-t\\
	\Downarrow& & \Downarrow\\
		u.-H-g & \Rightarrow&H-g.
\end{matrix}
\]
 The aim of this section is to show that the converse implications are not valid.
 \begin{example}[\textit{A pair $(U,\mathcal{P})$ which has an uniform-H-growth and is not H-trichotomic}]
Let  $U:\Delta\to\mathcal{B}(X)$ be the evolution operator defined by
\begin{align}
U(t,s)=\frac{h_1^2(t)}{h_1^2(s)}P_1+ \frac{h_2(s)}{h_2(t)}P_2+\frac{h_3(t)}{h_3(s)}\frac{h_4(s)}{h_4(t)}P_3 
\end{align}for all $(t,s)\in\Delta$.

Then $\mathcal{P}=\{P_1, P_2, P_3\}$ is compatible with $U$ and the pair $(U,\mathcal{P})$
has the properties:
\begin{align*}
(uh_1g_1)\thickspace \thickspace h_1(s)\|U(t,s)P_1(s)x\|&=\frac{h_1^2(t)}{h_1(s)}\|P_1(s)x\|\leq h_1(t) \|P_1\|\|x\|\leq Nh_1(t)\|x\|\\
 (uh_2g_1)\thickspace \thickspace h_2(s)\|V_2(t,s)P_2(t)x\|&={h_2(t)}\|P_2(s)x\|\leq  h_2(t)\|P_2\|\|x\|\leq Nh_2(t)\|x\|\\
 (uh_3g_1)\thickspace \thickspace h_3(s)U(t,s)P_3(s)x\|&=\frac{h_3(t)h_4(s)}{h_4(t)}\|P_3(s)x\|\leq Nh_3(t)\|x\|\\
(uh_4g_1)\thickspace \thickspace h_4(s)V_3(t,s)P_3(t)x\|&=\frac{h_4(t)h_3(s)}{h_3(t)}\|P_3(s)x\|\leq N h_4(t)\|x\|
\end{align*} for all $(t,s,x)\in\Delta\times X$, where 
$$N=\|P_1\|+\|P_2\|+\|P_3\|.$$

In consequence the pair $(U,\mathcal{P})$ has a u-H-g and hence H-g	.

The pair is not H-t, because if we assume on the contrary then by $(h_1t_1)$ there exists a function $N:\mathbb{R}_+\to[1,\infty)$ such that
\begin{equation*}
\frac{h_2^2(t)}{h_2(s)}\|P_2x\|=h_2(t)\|V_2(t,s)P_2x\|\leq N(s)h_2(s)\|P_2x\|
\end{equation*}which implies
\begin{equation*}
h_2^2(t)\leq N(s)h_2^2(s), \text{ for all } (t,s)\in\Delta.
\end{equation*}
This inequality is not valid ({it is sufficient to consider $s=0$ and $t\to\infty$}). Hence, $(U,\mathcal{P})$ is not H-t and in consequence is not u-H-t.

In conclusion this example shows that in general the implications
$$u-H-g\Rightarrow u-H-t$$ and 
$$H-g\Rightarrow H-t$$ are not valid.
\end{example}
\begin{example}[\textit{A pair $(U,\mathcal{P})$ which is H-trichotomic and has not an uniform H-growth}]
Let  $U:\Delta\to\mathcal{B}(X)$ be the evolution operator defined by
\begin{align}
U(t,s)=\frac{h_1^2(s)}{h_1^2(t)}P_1+ \frac{h_2^2(s)}{h_2^2(t)}P_2+\frac{h_2^3(s)h_3(t)}{h_2^3(t)h_3(s)}\frac{h_4(s)}{h_4(t)}P_3 
\end{align}for all $(t,s)\in\Delta$.

We observe that if denote
$$N(t)=(\|P_1\|+\|P_2\|+\|P_3\|)h_2^3(t)$$
then
\begin{align*}
(h_1t_1)\thickspace \thickspace h_1(t)\|U(t,s)P_1(s)x\|&=\frac{h_1^2(s)}{h_1(t)}\|P_1(s)x\|\leq h_1(s) \|P_1\|\|x\|\leq N(s)h_1(s)\|x\|,\\
(h_2t_1)\thickspace \thickspace h_2(t)\|V_2(t,s)P_2(s)x\|&=\frac{h_2^3(t)}{h_2^2(s)}\|P_2(s)x\|\leq   N(t)h_2(s)\|x\|,\\
(h_3t_1)\thickspace \thickspace h_3(s)\|U(t,s)P_3(s)x\|&=h_3(t)\frac{h_2^3(s)h_4(s)}{h_2^3(t)h_4(t)}\|P_3(s)x\|\leq N(s)h_3(t)\|x\|,\\
(h_4t_1)\thickspace \thickspace h_4(s)\|V_3(t,s)P_3(s)x\|&=h_4(t)\frac{h_3(s)h_2^3(t)}{h_3(t)h_2^3(s)}\|P_3(s)x\|\leq N(t) h_4(t)\|x\|,
\end{align*} for all $(t,s,x)\in\Delta\times X.$

Finally it follows that the pair $(U,\mathcal{P})$ is H-trichotomic.

If we assume that $(U,\mathcal{P})$ has an uniform H-growth then by $(uh_2g_1)$ it results that there exits $M\geq 1$ such that	
\begin{equation*}
\frac{h_2^2(t)}{h_2(s)}\|P_2x\|=h_2(s)\|V_2(t,s)P_2x\|\leq Mh_2(t)\|x\|, \text{ for all } (t,s)\in\Delta.
\end{equation*}
Thus we obtain
\begin{equation*}
h_2^2(t)\|P_2\|\leq Mh_2^2(s), \text{ for all } (t,s)\in\Delta.
\end{equation*}
which is impossible for $s=0$ and $t\to\infty$.

We conclude that $(U,\mathcal{P})$ has not H-growth. 

Furthermore, it follows that the implications
\begin{align*}
H-t&\Rightarrow u-H-t\\
H-g&\Rightarrow u- H-g\\
H-t&\Rightarrow  u-H-g
\end{align*} are not valid.
	
\end{example}

\section{The main result}
In this section we give a characterization of H-trichotomy in terms of a certain type of Lyapunov functions. Firstly we introduce
\begin{definition}\label{def-lyapunov-function}
	A function $L:\mathbb{R}_+\times X\to\mathbb{R}_+$ is called \textit{a H-Lyapunov function} for the pair $(U,\mathcal{P})$  if there exists a nondecreasing map $T:\mathbb{R}_+\to[1,\infty)$ such that 
	\begin{itemize}
\item[$(L_0)$  ]$\|x\|\leq L(t,x)\leq T(t)\|x\|;$
\item[$(h_1L)$  ]$h_1(t)L(t,U(t,s)P_1(s)x)\leq T(s)h_1(s)L(s,x);$
\item[$(h_2L)$  ]$h_2(t)L(s,V_2(t,s)P_2(t)x)\leq T(t)h_2(s)L(t,x);$
\item[$(h_3L)$  ]$h_3(s)L(t,U(t,s)P_3(s)x)\leq T(s)h_3(t)L(s,x);$
\item[$(h_4L)$  ]$h_4(s)L(s,V_3(t,s)P_3(t)x)\leq T(t)h_4(s)L(t,x),$
	\end{itemize}
		for all $(t,x)\in\mathbb{R}_+\times X$.
\end{definition}
We begin with
\begin{lemma}
	If the pair $(U,\mathcal{P})$ has a H-growth then the function $L:\mathbb{R}_+\times X\to\mathbb{R}_+$ defined by
	\begin{align}\label{functia-Lyapunov}
	L(t,x)&=\sup_{\tau\geq t} \frac{h_1(t)}{h_1(\tau)}\|U(\tau,t)P_1(t)x\|
	+\sup_{r\leq t} \frac{h_2(r)}{h_2(t)}\|V_2(t,r)P_2(t)x\|\nonumber\\
	&+\sup_{\tau\geq t} \frac{h_3(t)}{h_3(\tau)}\|U(\tau,t)P_3(t)x\|
	+\sup_{r\leq t} \frac{h_4(r)}{h_4(t)}\|V_3(t,r)P_3(t)x\|
	\end{align} satisfies the condition $(L_0)$.
\end{lemma}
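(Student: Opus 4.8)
The plan is to establish the two inequalities in $(L_0)$ separately: the lower bound by evaluating each of the four suprema at the endpoint of its index range, where the (reverse-)evolution factor collapses to a projector, and the upper bound by feeding the four $H$-growth estimates of Definition \ref{def-crestere} into the four suprema so that each is controlled uniformly in its supremum variable.

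For the lower bound $\|x\|\le L(t,x)$ I would first discard all indices except the trivial one in each supremum. Taking $\tau=t$ in the first sum and using $U(t,t)=I$ yields the term $\|P_1(t)x\|$; taking $\tau=t$ in the third sum gives $\|P_3(t)x\|$; and taking $r=t$ in the second and fourth sums, where property $(v_6)$ of Remark \ref{rem-proiectorstrong} gives $V_j(t,t)P_j(t)x=P_j(t)x$, produces $\|P_2(t)x\|$ and $\|P_3(t)x\|$. Since each supremum dominates the value at its chosen index, I obtain $L(t,x)\ge\|P_1(t)x\|+\|P_2(t)x\|+\|P_3(t)x\|$, and then the orthogonality relation $(o_1)$, namely $P_1(t)+P_2(t)+P_3(t)=I$, together with the triangle inequality gives $L(t,x)\ge\|x\|$.

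For the upper bound I would show each supremum is at most $M(t)\|x\|$. Applying $(h_1g_1)$ with the admissible pair $(\tau,t)$ in place of $(t,s)$ gives $h_1(t)\|U(\tau,t)P_1(t)x\|\le M(t)h_1(\tau)\|x\|$, and dividing by $h_1(\tau)$ bounds the first term by $M(t)\|x\|$ independently of $\tau$; the same substitution in $(h_3g_1)$ handles the third supremum. For the second and fourth suprema I would use $(h_2g_1)$ and $(h_4g_1)$ with $(t,r)$ in place of $(t,s)$; dividing by $h_2(t)$ (respectively $h_4(t)$) bounds each term by $M(t)\|x\|$, again uniformly in $r$. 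Summing the four estimates yields $L(t,x)\le 4M(t)\|x\|$, so $(L_0)$ holds with $T=4M$, which is nondecreasing and $[1,\infty)$-valued because $M$ is.

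The computations are elementary once the indices are aligned, so there is no deep obstacle; the only step requiring genuine care is matching each running supremum variable to the correct time slot of the $H$-growth inequalities, so that the constant factor $M$ is always evaluated at the fixed time $t$ and never at the supremum variable. This uniformity in $\tau$ (respectively $r$) is exactly what permits passing from the pointwise estimate to the bound on the supremum, and an index slip here would leave an $M(\tau)$ inside a supremum and break the argument. Notice that monotonicity of $M$ is not used in the estimate itself; it is invoked only at the very end to conclude that $T$ is nondecreasing.
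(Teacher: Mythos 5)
Your proof is correct and takes essentially the same route as the paper: the upper bound $L(t,x)\le 4M(t)\|x\|=T(t)\|x\|$ obtained by feeding the four H-growth inequalities of Definition \ref{def-crestere} into the four suprema, and the lower bound obtained by evaluating each supremum at its trivial index and combining $P_1(t)+P_2(t)+P_3(t)=I$ with the triangle inequality. The paper states these two estimates without spelling out the index substitutions; your write-up simply makes those details (including the use of $(v_6)$ and the placement of $M$ at the fixed time $t$) explicit.
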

\begin{proof}
	If $(U,\mathcal{P})$ has a H-growth then by Definition \ref{def-crestere} it follows that there is a nondecreasing function $M:\mathbb{R}_+\to[1,\infty)$ such that
	\begin{equation*}
	L(t,x)\leq T(t)\|x\|, 
	\end{equation*}where $T(t)=4M(t)$, for all $(t,x)\in\mathbb{R}_+\times X.$
	On the other hand from the definition of $L$ it results that
	\begin{equation*}
	L(t,x)\geq \|P_1(t)x\|+\|P_2(t)x\|+\|P_3(t)x\|\geq \|P_1(t)x+P_2(t)x+P_3(t)x\|= \|x\|, 
	\end{equation*}for all $(t,x)\in\mathbb{R}_+\times X.$
\end{proof}
\begin{theorem}\label{teorem1}
	The pair $(U,\mathcal{P})$ is H-trichotomic if and only if there exists a H-Lyapunov function $L:\mathbb{R}_+\times X\to\mathbb{R}_+$ for $(U,\mathcal{P})$.
\end{theorem}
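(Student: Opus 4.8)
The plan is to prove the two implications separately, treating the converse (sufficiency) direction as the routine one and the direct (necessity) direction as the substantive one.

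For sufficiency — assuming a $H$-Lyapunov function $L$ exists, deduce $H$-trichotomy — I would simply sandwich each Lyapunov inequality between the two bounds of $(L_0)$. For instance, combining $(h_1L)$ with $(L_0)$ gives
$$h_1(t)\|U(t,s)P_1(s)x\|\le h_1(t)L(t,U(t,s)P_1(s)x)\le T(s)h_1(s)L(s,x)\le T(s)^2h_1(s)\|x\|,$$
which is exactly $(h_1t_1)$ with $N:=T^2$ (nondecreasing, since $T$ is). The identical three-step sandwich turns $(h_2L),(h_3L),(h_4L)$ into $(h_2t_1),(h_3t_1),(h_4t_1)$, so $(U,\mathcal{P})$ is $H$-trichotomic. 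No structural difficulty arises here.

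For necessity I would take the explicit candidate $L$ from the preceding Lemma. Since $H$-trichotomy implies $H$-growth (the vertical implication in the diagram of Section~4, which also follows at once from $h_1(s)\le h_1(t)$), the Lemma already supplies $(L_0)$, and it remains only to verify the four Lyapunov inequalities. The mechanism is always the same: when the argument of $L$ is $U(t,s)P_1(s)x$ it lies in Range~$P_1(t)$, so orthogonality $(o_2)$ annihilates all but the first supremum; the cocycle identity $(e_2)$ together with invariance collapses $U(\tau,t)P_1(t)U(t,s)P_1(s)x$ to $U(\tau,s)P_1(s)x$; and the desired estimate then follows by comparing the supremum over $\{\tau\ge t\}$ with the supremum over the larger set $\{\tau\ge s\}$, with $T$ arising from the $H$-growth bound on the recovered terms. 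The analogous reduction for $(h_2L)$ uses the strong-invariance apparatus of Remark~\ref{rem-proiectorstrong} (the maps $V_2$ and the relations $(v_1)$--$(v_6)$) to replace the $V_2(t,s)$-expressions by backward supremum comparisons over $\{r\le s\}\subseteq\{r\le t\}$.

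The main obstacle lies in the central direction, i.e.\ in verifying $(h_3L)$ and $(h_4L)$ simultaneously. Because $P_3$ is strongly invariant, the argument $U(t,s)P_3(s)x$ keeps both the third (forward, $h_3$) and the fourth (backward, $h_4$) supremum alive, so these two terms must be controlled together rather than in isolation. The delicate point is the splitting of the backward index $r\le t$ at the value $s$: for $s\le r\le t$ one rewrites $V_3(t,r)U(t,s)P_3(s)x=U(r,s)P_3(s)x$ using $(v_3)$ and the cocycle law, while for $r<s$ one uses $V_3(t,r)=V_3(s,r)V_3(t,s)$ from $(v_4)$ to reduce to $V_3(s,r)P_3(s)x$. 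Carrying out these two cases and matching the monotone factor so that a single nondecreasing $T$ dominates all four estimates is where the real bookkeeping — and the only genuine risk of a direction slip in the supremum comparisons — is concentrated; everything else is algebra driven by $(e_1),(e_2)$, $(o_1),(o_2)$ and $(v_1)$--$(v_6)$.
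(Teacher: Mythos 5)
Your sufficiency argument is exactly the paper's: sandwich each Lyapunov inequality between the two bounds of $(L_0)$ and set $N=T^2$. Your necessity skeleton also matches the paper: the explicit $L$ from the preceding Lemma, the collapse of $L(t,U(t,s)P_1(s)x)$ to a single supremum via orthogonality and invariance, and the splitting of the backward index $r\le t$ at $s$ (resp.\ of the forward index $\tau\ge s$ at $t$) for the central terms, using $(v_3)$, $(v_4)$ and the cocycle law, is precisely what the paper does for $(h_3L)$ and $(h_4L)$.

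However, the estimation mechanism you give for the non-central inequalities fails as stated, on two counts. First, comparing the supremum over $\{\tau\ge t\}$ with the supremum over the larger set $\{\tau\ge s\}$ does not yield $(h_1L)$: after the collapse, the terms of $h_1(t)L(t,U(t,s)P_1(s)x)$ carry the weight $\frac{h_1^2(t)}{h_1(\tau)}\|U(\tau,s)P_1(s)x\|$, while the corresponding terms of $h_1(s)L(s,x)$ carry $\frac{h_1^2(s)}{h_1(\tau)}\|U(\tau,s)P_1(s)x\|$, so term-by-term domination costs a factor $h_1^2(t)/h_1^2(s)$, unbounded in $t$, which no nondecreasing $T(s)$ can absorb. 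Second, $T$ cannot ``arise from the $H$-growth bound on the recovered terms'': $(h_1g_1)$ only gives $\|U(\tau,s)P_1(s)x\|\le M(s)\frac{h_1(\tau)}{h_1(s)}\|x\|$, and inserting this produces $M(s)\frac{h_1^2(t)}{h_1(s)}\|x\|$, again unbounded in $t$. What is needed, and what the paper uses, is the trichotomy hypothesis $(h_1t_1)$ applied to each recovered term, $\|U(\tau,s)P_1(s)x\|\le N(s)\frac{h_1(s)}{h_1(\tau)}\|x\|$, which combined with $h_1(\tau)\ge h_1(t)$ and $\|x\|\le L(s,x)$ gives $(h_1L)$ with $T=2N$; the analogous use of $(h_2t_1)$--$(h_4t_1)$ handles the remaining inequalities, including both cases of the central splitting. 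There is also a structural reason your mechanism cannot be repaired: by your own (correct) sufficiency direction, any pair admitting an $H$-Lyapunov function is $H$-trichotomic, so if the Lyapunov inequalities followed from the $H$-growth bounds alone, then $H$-growth would imply $H$-trichotomy, contradicting Example 4.1 of the paper, which exhibits a pair with uniform $H$-growth that is not $H$-trichotomic.
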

\begin{proof}
	\textit{Necessity:}If the pair $(U,\mathcal{P})$ is H-trichotomic then it has a H-growth and by the previous lemma we have the function $L:\mathbb{R}_+\times X\to\mathbb{R}_+$ given by (\ref{functia-Lyapunov}) that satisfies $(L_0)$.
	
	In what follows we shall denote $T(t)=2N(t)$, where the function N is given by Definition \ref{def-tricho}. 
\begin{itemize}
	\item[	$\boldsymbol{(h_1L)}$]  We observe that by the definition of $L$ and the condition $(h_1t_1)$ it results
	\begin{align*}
	&h_1(t)L(t,U(t,s)P_1(s)x)=h_1(t)\sup_{\tau\geq t} \frac{h_1(t)}{h_1(\tau)}\|U(\tau,t)P_1(t)U(t,s)P_1(s)x\|\\
	&=\sup_{\tau\geq t} \frac{h_1^2(t)}{h_1(\tau)}\|U(\tau,s)P_1(s)x\|\leq N(s)\frac{h_1(t)h_1(s)}{h_1(\tau)}\|x\|\leq N(s)h_1(s)\|x\|\\
	&\leq T(s)h_1(s)L(t,x).
	\end{align*}
	\item[	$\boldsymbol{ (h_2L)}$]  Similarly, the inequality $(h_2t_1)$ implies that
	\begin{align*}
	&h_2(t)L(s,V_2(t,s)P_2(t)x)=h_2(t)\sup_{r\leq s} \frac{h_2(r)}{h_2(s)}\|V_2(s,r)P_2(s)V_2(t,s)P_2(t)x\|\\
	&=h_2(t)\sup_{r\leq s} \frac{h_2(r)}{h_2(s)}\|V_2(t,r)P_2(t)x\|
	\leq N(t)h_2(s)\|x\|\\
	&\leq T(t)h_2(s)L(t,x),\\
	\end{align*}for all $(t,s,x)\in\Delta\times X.$
	\item[	$\boldsymbol{ (h_3L)}$] Firstly, we observe that from $(h_3t_1)$ we obtain
\begin{align}
&h_3(s)L(t,U(t,s)P_3(s)x)=h_3(s)(\sup_{\tau\geq t} \frac{h_3(t)}{h_3(\tau)}\|U(\tau,t)P_3(t)U(t,s)P_3(s)x\|\nonumber\\
&+\sup_{r\leq t} \frac{h_4(r)}{h_4(t)}\|V_3(t,r)P_3(t)U(t,s)P_3(s)x\|)\nonumber\\
&=h_3(s)(\sup_{\tau\geq t} \frac{h_3(t)}{h_3(\tau)}\|U(\tau,s)P_3(s)x\|+\sup_{r\leq t} \frac{h_4(r)}{h_4(t)}\|V_3(t,r)P_3(t)U(t,s)P_3(s)x\|)\nonumber\\
&\leq N(s)h_3(t)L(s,x)+h_3(s)\sup_{r\leq t} \frac{h_4(r)}{h_4(t)}\|V_3(t,r)P_3(t)U(t,s)P_3(s)x\|,\label{ec1}
\end{align}for all $(t,s,x)\in\Delta\times X.$
\begin{itemize}
	\item [\textit{Case I.}] If $s\leq r\leq t\leq \tau$ then by $(h_3t_1)$ we have that
	\begin{align}
	&h_3(s)\sup_{s\leq r\leq t} \frac{h_4(r)}{h_4(t)}\|V_3(t,r)U(t,s)P_3(s)x\|\nonumber\\&
	=h_3(s)	\sup_{s\leq r\leq t} \frac{h_4(r)}{h_4(t)}\|V_3(t,r)U(t,r)U(r,s)P_3(s)x\|\nonumber\\
	&\leq N(s)h_3(s)\sup_{s\leq r\leq t}h_3(s) \frac{h_4(r)}{h_4(t)}\frac{h_3(r)}{h_3(s)}\|x\|\nonumber\\
	&\leq N(s)h_3(t)L(s,x),\label{ec2}
	\end{align}for all $(t,s,x)\in\Delta\times X.$
	\item[\textit{Case II.}] Similarly, if $r\leq s\leq t\leq \tau$ then by $(h_3t_1)$ we obtain
	\begin{align}
	&h_3(s)	\sup_{r\leq t} \frac{h_4(r)}{h_4(t)}\|V_3(t,r)U(t,s)P_3(s)x\|\nonumber\\
	&=h_3(s)	\sup_{r\leq t} \frac{h_4(r)}{h_4(t)}\|V_3(s,r)V_3(t,s)P_3(t)U(t,s)P_3(s)x\|\nonumber	\\
	&=h_3(s)\sup_{r\leq t} \frac{h_4(r)}{h_4(t)}\|V_3(s,r)P_3(s)x\|\leq N(s) h_3(s)\sup_{r\leq t} \frac{h_4(r)}{h_4(t)}\frac{h_4(s)}{h_4(r)}\|x\|\nonumber\\
	&\leq N(s)h_3(t)L(s,x),\label{ec3}
	\end{align}for all $(t,s,x)\in\delta\times X.$
\end{itemize}
The inequalities (\ref{ec1}), (\ref{ec2}) and (\ref{ec3}) show that $(h_4L)$ is satisfied for $T(t)=2N(t)$.
\item[	$\boldsymbol{(h_4L)}$] Using Remark \ref{rem-proiectorstrong} and the inequality $(h_4t_1)$ it follows that
\begin{align}
& h_4(s)L(s,V_3(t,s)P_3(t)x)=h_4(s)L(s,P_3(s)V_3(t,s)P_3(t)x)\nonumber\\
&=h_4(s)(\sup_{\tau\geq s} \frac{h_3(s)}{h_3(\tau)}\|U(\tau,s)P_3(s)V_3(t,s)P_3(t)x\|\nonumber\\
&+\sup_{r\leq s} \frac{h_4(r)}{h_4(s)}\|V_3(s,r)P_3(s)V_3(t,s)P_3(t)x\|)\nonumber\\
&=h_4(s)\sup_{\tau\geq s} \frac{h_3(s)}{h_3(\tau)}\|U(\tau,s)V_3(t,s)P_3(t)x\|+h_4(s)\sup_{r\leq s} \frac{h_4(r)}{h_4(s)}\|V_3(t,r)P_3(t)x\|\nonumber\\
&\leq h_4(s)\sup_{\tau\geq s} \frac{h_3(s)}{h_3(\tau)}\|U(\tau,s)V_3(t,s)P_3(t)x\|+N(t)h_4(s)\sup_{r\leq s}\frac{h_4(r)}{h_4(s)}\frac{h_4(t)}{h_4(r)}\|x\|\nonumber\\
&\leq h_4(s)\sup_{\tau\geq s} \frac{h_3(s)}{h_3(\tau)}\|U(\tau,s)V_3(t,s)P_3(t)x\|+N(t)h_4(t)L(t,x),\label{ec4}
\end{align}for all $(t,s,x)\in\Delta\times X.$
\begin{itemize}
	\item[\textit{Case I.}] If $\tau\geq t\geq s$ then by $(e_2)$, Remark \ref{rem-proiectorstrong} and $(h_3t_4)$ we obtain that
	\begin{align}
&	h_4(s)\sup_{\tau\geq s}\frac{h_3(s)}{h_3(\tau)}\|U(\tau,s)V_3(t,s)P_3(t)x\|\nonumber\\&= h_4(s)\sup_{\tau\geq s} \frac{h_3(s)}{h_3(\tau)}\|U(\tau,t)P_3(t)x\|\nonumber\\
	&\leq N(t)h_4(s)\sup_{r\geq s} \frac{h_3(s)}{h_3(\tau)}\frac{h_3(\tau)}{h_3(t)}\|x\|\leq N(t)h_4(t)\|x\|\nonumber\\
	&\leq N(t)h_4(t)L(t,x),\label{ec5}
	\end{align}for all $(t,s,x)\in\Delta\times X.$
	\item[\textit{Case II.}] Similarly, if $t\geq\tau\geq s$ then by Remark \ref{rem-proiectorstrong} and $(h_4t_1)$ it result that
	\begin{align}
	&h_4(s)\sup_{t\geq\tau\geq s}\frac{h_3(s)}{h_3(\tau)}\|U(\tau,s)V_3(t,s)P_3(t)x\|\nonumber\\
	&=h_4(s)\sup_{t\geq\tau\geq s}\frac{h_3(s)}{h_3(\tau)}\|U(\tau,s)V_3(\tau,s)P_3(\tau)V_3(t,\tau)P_3(t)x\|\nonumber\\
	&=h_4(s)\sup_{t\geq\tau\geq s}\frac{h_3(s)}{h_3(\tau)}\|V_3(t,\tau)P_3(t)x\|\nonumber\\
	&\leq N(t) h_4(s)\sup_{t\geq\tau\geq s}\frac{h_3(s)}{h_3(\tau)}\frac{h_4(t)}{h_4(\tau)}\|x\|\nonumber\\
	&\leq N(t)h_4(t)L(t,x),\label{ec6}
	\end{align}for all $(t,s,x)\in\Delta\times X.$
\end{itemize}
The inequalities (\ref{ec4}), (\ref{ec5}) and (\ref{ec6}) show that $(h_4L)$ is satisfied.
\end{itemize}	
\textit{Sufficiency:} Using Definition \ref{def-lyapunov-function} we obtain that
\begin{align*}
 h_1(t)\|U(t,s)P_1(s)x\|&\leq h_1(t)L(t,U(t,s)P_1(s)x)\\
&\leq T(s)h_1(s)L(s,x)\leq T^2(s)h_1(s)\|x\|;\\
 h_2(t)\|V_2(t,s)P_2(t)x\|&\leq h_2(t)L(s,V_2(t,s)P_2(t)x)\\&\leq T(t)h_2(s)L(t,x)\leq T^2(t)h_2(s)\|x\|;\\
 h_3(s)\|U(t,s)P_3(s)x\|&\leq h_3(s)L(t,U(t,s)P_3(s)x)\\&\leq T(s)h_3(t)L(s,t)\leq T^2(s)h_3(t)\|x\|;\\
 h_4(s)\|V_3(t,s)P_4(t)x\|&\leq h_4(s)L(t,V_3(t,s)P_4(t)x)\\&\leq T(t)h_4(t)L(t,x)\leq T^2(t)h_4(t)\|x\|,
\end{align*}for all $(t,s,x)\in\Delta\times X.$

Finally, by Definition \ref{def-tricho} it results that $(U,\mathcal{P})$ is H-trichotomic.
\end{proof}
As a particular case, we obtain
\begin{corollary}
	The pair $(U,\mathcal{P})$ is uniformly-H-trichotomic if and only if there exists an uniform H-Lyapunov function for $(U,\mathcal{P})$.
\end{corollary}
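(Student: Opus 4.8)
The plan is to specialize the proof of Theorem~\ref{teorem1} and to check that, in the uniform regime, every function-valued constant appearing there may be replaced by a genuine constant. First I would fix the meaning of the statement: by a \emph{uniform} $H$-Lyapunov function I understand a function $L$ as in Definition~\ref{def-lyapunov-function} for which the nondecreasing map $T$ is constant, i.e.\ there exists $T\geq 1$ with $\|x\|\leq L(t,x)\leq T\|x\|$ and with $(h_1L)$--$(h_4L)$ holding for this single constant $T$. With this reading the corollary asks only that the equivalence established in Theorem~\ref{teorem1} be \emph{constant-preserving}, so the whole argument reduces to bookkeeping on the constants $M$, $N$, $T$.

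For necessity, assume $(U,\mathcal{P})$ is uniformly $H$-trichotomic, so the function $N$ of Definition~\ref{def-tricho} is constant. By the implication u-H-t $\Rightarrow$ u-H-g recorded in the diagram of Section~4, the pair also has a uniform $H$-growth, whence the function $M$ of Definition~\ref{def-crestere} is constant as well. I would then take exactly the same candidate $L$ given by (\ref{functia-Lyapunov}). The lemma preceding Theorem~\ref{teorem1} yields $(L_0)$ with $T=4M$, now a constant; and the necessity part of the proof of Theorem~\ref{teorem1} establishes $(h_1L)$--$(h_4L)$ with $T=2N$, again constant. Hence $L$ is a uniform $H$-Lyapunov function for $(U,\mathcal{P})$.

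For sufficiency, suppose a uniform $H$-Lyapunov function $L$ exists, with constant $T$. Then the four chains of inequalities in the sufficiency part of the proof of Theorem~\ref{teorem1} pass through verbatim and produce $(h_1t_1)$--$(h_4t_1)$ with $N=T^2$, which is constant. By Definition~\ref{def-tricho} the pair is therefore uniformly $H$-trichotomic, closing the equivalence.

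The only point requiring care---and the place I expect to be the main, though mild, obstacle---is the upper bound in $(L_0)$: the very construction of $L$ presupposes a growth estimate, and its constant is inherited from $M$. One must therefore genuinely invoke u-H-t $\Rightarrow$ u-H-g to guarantee that $M$, and hence $T=4M$, is a true constant rather than an unbounded nondecreasing function. The remaining estimates only combine the already-constant $N$ with the algebraic identities of Remark~\ref{rem-proiectorstrong}, so uniformity propagates automatically and no new computation beyond that of Theorem~\ref{teorem1} is needed.
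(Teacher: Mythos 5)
Your proposal is correct and follows essentially the same route as the paper, whose proof of this corollary is precisely to rerun the proof of Theorem~\ref{teorem1} in the particular case where the functions $M$, $N$ and $T$ are constant; your explicit constant-tracking (including the use of u-H-t $\Rightarrow$ u-H-g to make $M$ constant) just spells out what the paper leaves implicit. The only cosmetic remark is that the necessity argument produces the constants $4M$ for $(L_0)$ and $2N$ for $(h_1L)$--$(h_4L)$, so one should take a single constant such as $T=\max(4M,2N)$ to match the definition literally.
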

\begin{proof}
	It results from the proof of Theorem \ref{teorem1} for the particular case when the functions $M,N$ and $T$ are constant.
\end{proof}

\end{document}